\newcommand{\lean}[1]{\lstinline[language=lean]{#1}}
\definecolor{keywordcolor}{rgb}{0.7, 0.1, 0.1}   
\definecolor{tacticcolor}{rgb}{0.0, 0.1, 0.3}    
\definecolor{commentcolor}{rgb}{0.4, 0.4, 0.4}   
\definecolor{stringcolor}{rgb}{0.5, 0.3, 0.2}    
\definecolor{symbolcolor}{rgb}{0.1, 0.2, 0.7}    
\definecolor{sortcolor}{rgb}{0.1, 0.5, 0.1}      
\definecolor{attributecolor}{rgb}{0.7, 0.1, 0.1} 
\definecolor{errorcolor}{rgb}{1, 0, 0}           
\newcommand{\A}{\mathcal{A}}
\newcommand{\B}{\mathcal{B}}
\newcommand{\C}{\mathcal{C}}
\newcommand{\E}{\mathbb{E}}
\newcommand{\F}{\mathcal{F}}
\newcommand{\N}{\mathbb{N}}
\newcommand{\R}{\mathbb{R}}
\newcommand{\set}[1]{\left\{#1\right\}}
\newcommand{\inv}{^{-1}}
\newcommand{\eps}{\epsilon}
\newcommand{\Xge}[1]{X^{\ge#1}}
\newcommand{\Xle}[1]{X^{\le#1}}
\newcommand{\Xgt}[1]{X^{>#1}}
\newcommand{\Age}[1]{\mathcal{A}^{\ge#1}}
\newcommand{\Ale}[1]{\mathcal{A}^{\le#1}}
\newcommand{\Xgtn}{X^{>n}}
\newcommand{\Xlen}{X^{\le n}}
\newcommand{\Xlea}{X^{\le a}}
\newcommand{\Alen}{\mathcal{A}^{\le n}}
\newcommand{\dx}{\mathrm{d}x}
\newcommand{\dy}{\mathrm{d}y}
\newcommand{\prth}[1]{\left(#1\right)}
\renewcommand{\empty}{\varnothing}
\newcommand{\tendl}[1]{\longrightarrow_{#1}}
\newcommand{\dbrack}[1]{\llbracket #1 \rrbracket}
\newcommand{\ox}{\otimes}
\renewcommand{\phi}{\varphi}
\newcommand{\piun}{{\pi_{\dbrack{1,n}}}}
\newcommand{\Xint}[1]{X^{\dbrack{#1}}}
\newcommand{\Aint}[1]{\A^{\dbrack{#1}}}
\newcommand{\kernel}{\rightsquigarrow}
\newcommand{\ind}[1]{\mathds{1}_{#1}}
\newcommand{\dxi}{\mathrm{d}\xi}
\newcommand{\mathlib}{\texttt{Mathlib}}
\newcommand{\oxm}{\ox_m}
\newcommand{\oxk}{\ox_k}
\newcommand{\x}{\times}
\renewcommand{\o}{\circ}
\newtheorem{thm}{Theorem}[section]
\newtheorem{prop}[thm]{Proposition}
\newtheorem{lem}[thm]{Lemma}
\theoremstyle{definition}
\newtheorem{defi}[thm]{Definition}
\theoremstyle{remark}
\newtheorem*{nota}{Notation}
\begin{document}

	\begin{center}
		{\Large\bf A Formalization of the Ionescu-Tulcea Theorem in Mathlib} \\
		\vspace{1cm}
		Etienne Marion \\
		\vspace{1cm}
		ENS de Lyon, 46, Allée d’Italie, 69007 Lyon, France \\
		\texttt{etienne.marion@ens-lyon.fr}
	\end{center}

	\vspace{0.5cm}

	\section*{Abstract}
	We describe the formalization of the Ionescu-Tulcea theorem, showing the existence of a probability measure on the space of trajectories of a Markov chain, in the proof assistant Lean using the integrated library \mathlib. We first present a mathematical proof before exposing the difficulties which arise when trying to formalize it, and how they were overcome. We then build on this work to formalize the construction of the product of an arbitrary family of probability measures.

	\section{Introduction}
	Being able to talk about the joint distribution of an infinite family of random variables is crucial in probability theory. For example, one often needs the existence of a family of independent random variables. This relies on the existence of an infinite product measure. Indeed, given $(\Omega_i, \F_i, \mu_i)_{i\in\iota}$ a family of probability spaces, the existence of the product measure $\bigotimes_{i\in\iota}\mu_i$ yields a new probability space $(\prod_{i\in\iota}\Omega_i, \bigotimes_{i\in\iota}\F_i, \bigotimes_{i\in\iota}\mu_i)$, and the projections $X_i : \prod_{j\in\iota}\Omega_j \to \Omega_i$ give the desired family. For another example, consider discrete-time Markov chains. Given a measurable space $(E, \A)$ and a Markov kernel $\kappa$ from E to E, one might want to build a sequence $(X_n)_{n\in\N}$ of random variables with values in $E$ such that the conditional distribution of $X_{n+1}$ given $X_0, ..., X_n$ is $\kappa(X_n,\cdot)$.

	Such objects are fundamental in probability theory: families of independent variables allow to build more complicated objects, such as Brownian motion, while discrete-time Markov chains form a huge class of stochastic processes which contains random walks for instance. It so happens that those objects always exist without any restrictions on the spaces we consider. This is a direct consequence of the Ionescu-Tulcea theorem.

	The goal of this contribution is to provide a formalization of the proof of the aforementioned theorem using the proof assistant Lean \cite{Lean} and the associated library \mathlib~\cite{Mathlib}. The code is now fully integrated to \mathlib~and is therefore available to anybody who wishes to do formalization using Lean. It relies on previous work done by Rémy Degenne and Peter Pfaffelhuber who formalized the Carathéodory's extension theorem as well as projective families of measures and measurable cylinders.

	Lean is an interactive theorem prover, i.e.\ a computer program which allows to write mathematical definitions and proofs and checks whether they are mathematically correct. It is based on dependent type theory and allows to formalize a wide variety of mathematical topics. Most of those topics are gathered in a single library called \mathlib. Anyone can contribute their code to the library.

	This theorem has already been formalized by Johannes Hölzl in the proof assistant Isabelle/HOL \cite{Holzl2016} and the code is available \href{https://isabelle.in.tum.de/dist/library/HOL/HOL-Probability/Projective_Family.html#Projective_Family.Ionescu_Tulcea|locale}{here}. The main difference is that dependent types do not exist in Isabelle/HOL. Therefore they only consider measures over $X^\N$ for some measurable space $X$ rather than over a general product space $\prod_{n \in \N} X_n$. One of the main issues we stumbled upon was the so-called subtype-hell, and many of our design decisions were made to avoid manipulating too much subtypes, as explained in Section~\ref{sec:forma}. This issue does not occur in Isabelle/HOL because there are no subtypes. Because of this Hölzl's construction makes certain manipulations easier, while our construction seems to be more natural, as further	 discussed in Section~\ref{sub:comp-ker}. Note also that in the end, both the formalized version by Hölzl and the Theorem~8.24 in the classical reference \cite{Kal2021} only build a measure over the product space, while we build a family of Markov kernels from $\prod_{i \le a} X_i$ to $\prod_{n\in\N} X_n$. This in particular provides a more natural framework to build Markov chains. Let us finally mention the construction of a product measure in \cite{Shin2022}.

	In Section~\ref{sec:maths} we present basic notions and results of probability theory as well as the mathematical statement and proof of Ionescu-Tulcea theorem. Then in Section~\ref{sec:forma} we present the key points of the theorem which proved complicated to formalize. Finally in Section~\ref{sec:prod-measure} we briefly describe how this work allows to formalize the product of an arbitrary family of probability measures.

	\section{Mathematical aspects}\label{sec:maths}

	\subsection{Preliminaries on probability theory}\label{sec:proba}

	In this first section we provide standard definitions and results from measure theory and probability theory. We do not necessarily give fully general results and only state results which will be used later. For a complete reference, one can for instance take a look at \cite{Kal2021}. These are all very classical concepts. However our goal is to illustrate how formalization sometimes forces us to move away from the classical approach, therefore we recall this classical approach first.

	We start by the key result to prove Ionescu-Tulcea theorem, which is Carathéodory's extension theorem.

	\begin{defi}\label{def:ring}
		Let $X$ be a set. A \emph{ring of sets} over $X$ is a family of subsets of $X$ which contains the empty set and is closed under finite intersections and set difference.
	\end{defi}

	We now fix a set $X$ and a ring of sets $S$ over $X$.

	\begin{defi}\label{def:add-cont}
		An \emph{additive content} over $S$ is a map $\mu : \mathcal{P}(X) \to [0,\infty]$ such that $\mu(\empty) = 0$ and if $A_1, ..., A_n$ are pairwise disjoint sets in $S$ such that $\bigcup_{i=1}^n A_i \in S$, then we have
		$\mu\prth{\bigcup_{i=1}^n A_i} = \sum_{i=1}^n\mu(A_i)$.
	\end{defi}

	\begin{defi}\label{def:sigma-sub}
		Let $\mu$ be an additive content over $S$. Then $\mu$ is called \emph{$\sigma$-sub-additive} if for any $(A_n)_{n\in\N}$ such that for all $n\in\N$, $A_n \in S$ and $\bigcup_{n\in\N} A_n \in S$, we have the inequality $\mu\prth{\bigcup_{n\in\N} A_n} \le \sum_{n\in\N} \mu(A_n)$.
	\end{defi}

	\begin{lem}\label{lem:cont-empty}
	Let $\mu$ an additive content over $S$. To prove that $\mu$ is $\sigma$-sub-additive, it is enough to show that it is continuous at the empty set, namely that if $(A_n)_{n\in\N}$ is a non-increasing (for the inclusion of sets) family of elements of $S$ with empty intersection, then $\mu(A_n) \tendl{n\to\infty} 0$.
	\end{lem}

	\begin{thm}[Carathéodory's extension theorem]\label{thm:carath}
		Let $\mu$ be an additive content over $S$ which is also $\sigma$-sub-additive. Then there exists a measure, defined over the smallest $\sigma$-algebra containing $S$, which extends $\mu$.
	\end{thm}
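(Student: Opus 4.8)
The plan is to run the classical outer measure construction. From the content $\mu$ I would first build an outer measure $\mu^*$ on all of $\mathcal{P}(X)$ by setting
\[
\mu^*(A) = \inf\set{\sum_{n\in\N}\mu(A_n) : A_n \in S \text{ for all } n,\ A \subseteq \bigcup_{n\in\N} A_n},
\]
with the convention that the infimum of the empty family is $+\infty$ (so that $\mu^*(A) = \infty$ when $A$ admits no countable cover by elements of $S$). One checks directly that $\mu^*(\empty) = 0$, that $\mu^*$ is monotone, and that it is countably subadditive by a diagonal argument on covers; hence $\mu^*$ is a genuine outer measure.

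Next I would invoke the Carathéodory criterion: the collection $\mathcal{M}$ of $\mu^*$-measurable sets, i.e.\ those $B$ satisfying $\mu^*(A) = \mu^*(A\cap B) + \mu^*(A\priv B)$ for every $A \subseteq X$, forms a $\sigma$-algebra on which the restriction of $\mu^*$ is a measure. This part is the general Carathéodory machinery, which is independent of the specific hypotheses on $\mu$ and is already available in the library.

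The two substantive steps are then to show that $S \subseteq \mathcal{M}$ and that $\mu^*$ extends $\mu$. For the first, given $B \in S$ and arbitrary $A$, only the inequality $\mu^*(A) \ge \mu^*(A\cap B) + \mu^*(A\priv B)$ needs proof (the reverse is subadditivity). Taking any cover $(A_n)$ of $A$ by elements of $S$, the families $(A_n\cap B)$ and $(A_n\priv B)$ lie in $S$ by the ring axioms and cover $A\cap B$ and $A\priv B$ respectively; finite additivity of $\mu$ gives $\mu(A_n) = \mu(A_n\cap B) + \mu(A_n\priv B)$, and summing then taking the infimum yields the claim. For the extension, the bound $\mu^*(B) \le \mu(B)$ for $B \in S$ follows from the trivial cover $B, \empty, \empty, \dots$; conversely, for any cover $B \subseteq \bigcup_n A_n$ I write $B = \bigcup_n (B\cap A_n)$ with each $B\cap A_n \in S$ and $B \in S$, so $\sigma$-sub-additivity gives $\mu(B) \le \sum_n \mu(B\cap A_n) \le \sum_n \mu(A_n)$, the last inequality by monotonicity of the content; taking the infimum gives $\mu(B) \le \mu^*(B)$.

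I expect the main obstacle to be precisely the interplay in this last step: $\sigma$-sub-additivity is exactly what is needed to prevent $\mu^*$ from undershooting $\mu$ on $S$, and finite additivity together with the closure of $S$ under intersection and difference is what forces $S$ into $\mathcal{M}$. Once both are established, $\mathcal{M}$ is a $\sigma$-algebra containing $S$, hence contains the smallest such $\sigma$-algebra $\sigma(S)$, and restricting the measure $\mu^*\restr{\mathcal{M}}$ to $\sigma(S)$ produces a measure on $\sigma(S)$ agreeing with $\mu$ on $S$, as required.
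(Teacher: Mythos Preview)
Your argument is correct and is precisely the standard outer-measure construction of the Carath\'eodory extension. Note, however, that the paper does not supply its own proof of this theorem: it is stated in the preliminaries as a classical result (with a reference to \cite{Kal2021}) and is invoked as a black box already available in \mathlib, so there is no in-paper proof to compare against. Your sketch matches the usual textbook route and would serve perfectly well as the omitted argument.
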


We now give the definitions of basic notions related to Markov kernels, which are the objects of concern in the Ionescu-Tulcea theorem. For a more detailed account of transition kernels and how they are used in \mathlib, see \cite{Degenne25}.

	\begin{defi}\label{def:markov-kernel}
	Let $(X,\A)$ and $(Y,\B)$ be two measurable spaces. A \emph{Markov kernel} from $X$ to $Y$ is a map $\kappa : X \times \B \to [0,1]$ such that:
	\begin{itemize}
		\item for any $x \in X$, $\kappa(x,\cdot)$ is a probability measure;
		\item for any $B \in \B$, $\kappa(\cdot,B)$ is measurable.
	\end{itemize}
	One can therefore consider a Markov kernel as a measurable map which to any point in $X$ associates a probability distribution over the space $Y$, or more intuitively a random point in $Y$. In what follows we will often refer to $\kappa(x, \cdot)$ by simply writing $\kappa(x)$. Also, to denote the fact that $\kappa$ is a Markov kernel from $X$ to $Y$ we write $\kappa : X \kernel Y$.
\end{defi}

	There exist many operations involving Markov kernels, we now give the ones that will be used later.

	\begin{defi}\label{def:ker-map}
		Let $(X, \A)$, $(Y, \B)$ and $(Z, \C)$ be three measurable spaces, $\kappa : X \kernel Y$ and $f : Y \to Z$ be a measurable function. The \emph{push-forward kernel} of $\kappa$ by $f$ is the Markov kernel $f_*\kappa : X \kernel Z$ defined by:
		$$\forall x \in X, \forall C \in \C, f_*\kappa(x, C) := \kappa(x, f\inv(C)).$$
	\end{defi}

	\begin{defi}\label{def:comp}
		Let $(X, \A)$, $(Y, \B)$ and $(Z, \C)$ be three measurable spaces, and consider two Markov kernels $\kappa : X \kernel Y$ and $\eta : Y \kernel Z$. The \emph{composition} of $\kappa$ and $\eta$ is the Markov kernel $\eta \o \kappa : X \kernel Z$ defined by:
		$$\forall x \in X, \forall C \in \C, (\eta \o \kappa)(x, C) := \int_Y \eta(y, C) \kappa(x, \dy).$$
		If $\mu$ is a probability measure over $(X, \A)$, we likewise define the \emph{composition} of $\mu$ and $\kappa$ as a probability measure over $(Y, \B)$ by:
		$$\forall B \in \B, (\kappa \o \mu)(B) := \int_X \kappa(x, B)\mu(\dx).$$
	\end{defi}

	\begin{defi}\label{def:section}
		Let $X$ and $Y$ be two sets and let $A$ be a subset of $X \x Y$. For any $x \in X$, we define the \emph{section of $A$ by $x$} as the set
		$$A_x := \{y \in Y | (x, y) \in A\}.$$
	\end{defi}

	\begin{defi}\label{def:compProd}
		Let $(X, \A)$, $(Y, \B)$ and $(Z, \C)$ be three measurable spaces, and consider two Markov kernels $\kappa : X \kernel Y$ and $\eta : X \x Y \kernel Z$. The \emph{composition-product} of $\kappa$ and $\eta$ is the Markov kernel $\kappa \oxk \eta : X \kernel Y \x Z$ defined by:
		$$\forall x \in X, \forall D \in \B\ox\C, (\kappa \oxk \eta)(x, D) = \int_Y \eta(x, y, D_y) \kappa(x, \dy).$$

		If $\mu$ is a probability measure over $(X, \A)$ and $\kappa$ a Markov kernel from $X$ to $Y$, we define the \emph{composition-product} of $\mu$ and $\kappa$ as a probability measure over $(X \x Y, \A\ox\B)$ by:
		$$\forall C \in \A\ox\B, (\mu \oxm \kappa)(C) = \int_X \kappa(x, C_x) \mu(\dx).$$
		Let us informally describe this last notion. Given a point $x \in X$, we can interpret $\kappa(x)$ as a random point in $Y$. Then $\eta(x, \kappa(x))$ gives a random point in $Z$. The random point $(\kappa \oxk \eta)(x)$ is the random pair $(\kappa(x), \eta(x, \kappa(x)))$.
	\end{defi}

	The name \emph{composition-product} is not standard and is taken from \mathlib, where this operation is called \lean{compProd}. In the literature it is often referred to as \emph{product} or \emph{composition}, but those are used for other operations in Lean and we prefer to stick to \mathlib~conventions for clarity. The notations $\oxk$ and $\oxm$ are also taken from \mathlib, we use them here to avoid confusion between the two operations.

	\subsection{Statement and proof of the theorem}
	In this section we state the Ionescu-Tulcea theorem and give a proof of it. To ease the writing, let us give some notations.

	\begin{nota}
		If $\iota$ is a set of indices and $(X_i, \A_i)_{i \in \iota}$ a family of measurable spaces, then:
		\begin{itemize}
			\item we write $X$ for $\prod_{i \in \iota} X_i$ and $\A$ for $\bigotimes_{i \in \iota} \A_i$;
			\item if $I \subseteq \iota$, we write $X^I$ for $\prod_{i \in I} X_i$ and $\A^I$ for $\bigotimes_{i \in I} \A_i$;
			\item if $I \subseteq \iota$, we write $\pi_I$ for the canonical map from $X$ to $X^I$;
			\item if $ J \subseteq I \subseteq \iota$, we write $\pi_{I\to J}$ for the canonical map from $X^I$ to $X^J$.
		\end{itemize}
		If $a$ and $b$ are natural integers we set $\dbrack{a,b} := \{k \in \N | a \le k \le b\}$. If $\iota = \N$, then:
		\begin{itemize}
			\item For $n \in \N$ we write $\Xge{n}$ for $X^{\set{k \in \N \mid k \ge n}}$ and $\Age{n}$ for $\A^{\set{k \in \N \mid k \ge n}}$, with similar notations for $\le, >, <$;
			\item We simply write $\pi_n$ for $\pi_{\dbrack{0,n}}$ and $\pi_{a \to b}$ for $\pi_{\dbrack{0,a} \to \dbrack{0,b}}$.
		\end{itemize}
	\end{nota}

	Let us now state the theorem.

	\begin{thm}[Ionescu-Tulcea]\label{thm:it}
		Let $(X_n,\A_n)_{n\in\N}$ be a family of measurable spaces. Let $(\kappa_n)_{n\in\N}$ be a family of Markov kernels such that for any $n$, $\kappa_n$ is a kernel from $\Xlen$ to $X_{n+1}$. Then there exists a unique Markov kernel $\xi : X_0 \kernel \Xge{1}$ such that for any $n\ge1$,
		$${\piun}_*\xi = \kappa_0 \oxk ... \oxk \kappa_{n-1}.$$
	\end{thm}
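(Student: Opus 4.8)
The plan is to construct $\xi$ by applying Carathéodory's extension theorem (Theorem~\ref{thm:carath}) for each fixed $x_0 \in X_0$ to a content defined on measurable cylinders of $\Xge{1}$, and then to verify that the resulting family of measures depends measurably on $x_0$. I begin by assembling the finite-dimensional kernels. For each $n \ge 1$ set
$$P_n := \kappa_0 \oxk \kappa_1 \oxk \cdots \oxk \kappa_{n-1} : X_0 \kernel \Xint{1,n},$$
which is well-typed since $\kappa_i : \Xle{i} \kernel X_{i+1}$ and each composition-product absorbs exactly one new coordinate (modulo the canonical identifications of finite products). The key structural fact is that $(P_n)_n$ is \emph{projective}: for $m \le n$ one has $(\pi_{\dbrack{1,n}\to\dbrack{1,m}})_* P_n = P_m$. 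This is immediate from the definition of $\oxk$ together with the fact that each $\kappa_i$ is Markov, so that integrating out the last coordinate against $\kappa_{n-1}(\cdot, X_n) = 1$ collapses $P_n$ to $P_{n-1}$; the general case follows by induction. These $P_n$ are precisely the marginals prescribed by the statement, so $\xi$ is forced on cylinders and only its existence is at stake.

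Fixing $x_0$, a measurable cylinder of $\Xge{1}$ is a set $C = \pi_{\dbrack{1,n}}\inv(B)$ with $B \in \Aint{1,n}$, and these form a ring of sets in the sense of Definition~\ref{def:ring}. I set $\mu_{x_0}(C) := P_n(x_0, B)$; projectivity guarantees that this is independent of the chosen representation $(n,B)$, and additivity of $\mu_{x_0}$ follows from additivity of the measures $P_n(x_0,\cdot)$, so that $\mu_{x_0}$ is an additive content. By Carathéodory's extension theorem together with Lemma~\ref{lem:cont-empty}, it then suffices to prove that $\mu_{x_0}$ is continuous at $\empty$ in order to extend it to a probability measure $\xi(x_0,\cdot)$ on $\Age{1}$.

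The hard part will be exactly this continuity at $\empty$, where the genuinely probabilistic content of the theorem lies. I argue by contradiction: suppose $(C_k)_k$ is a non-increasing sequence of cylinders with $\mu_{x_0}(C_k) \ge \eps$ for some $\eps > 0$ and all $k$, and produce a point of $\bigcap_k C_k$. After padding I may take $C_k = \pi_{\dbrack{1,k}}\inv(B_k)$. Disintegrating $P_k$ along its first coordinate writes $\mu_{x_0}(C_k) = \int_{X_1} h_k^{(1)}(x_1)\,\kappa_0(x_0,\dx_1)$, where $h_k^{(1)}(x_1)$ is the conditional content of $C_k$ given $x_1$, computed from $\kappa_1, \kappa_2, \dots$. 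Since the $C_k$ decrease, each sequence $(h_k^{(1)})_k$ decreases pointwise to some $h_\infty^{(1)} \le 1$, and dominated convergence gives $\int h_\infty^{(1)}\,\kappa_0(x_0,\cdot) \ge \eps$; as $\kappa_0(x_0,\cdot)$ is a probability measure, the set $\set{h_\infty^{(1)} \ge \eps/2}$ has positive measure, so I may choose $x_1^\ast$ with $h_k^{(1)}(x_1^\ast) \ge \eps/2$ for every $k$. Iterating this selection one coordinate at a time---conditioning on $(x_1^\ast, \dots, x_j^\ast)$ and extracting $x_{j+1}^\ast$ so that the conditional contents stay above $\eps/2^{j+1}$---produces a point $x^\ast = (x_1^\ast, x_2^\ast, \dots) \in \Xge{1}$. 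Finally, the conditional content of $C_k$ given its first $k$ coordinates is the indicator $\ind{B_k}(x_1^\ast, \dots, x_k^\ast)$, which being positive must equal $1$; hence $x^\ast \in C_k$ for every $k$, contradicting $\bigcap_k C_k = \empty$. This diagonal conditioning argument, together with the bookkeeping that the partial integrals are measurable and genuinely monotone in $k$, is the technical heart of the proof.

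It remains to assemble the measures $\xi(x_0,\cdot)$ into a Markov kernel and to establish uniqueness. Measurability of $x_0 \mapsto \xi(x_0, E)$ holds when $E$ is a cylinder, since there it equals $x_0 \mapsto P_n(x_0, B)$, which is measurable because $P_n$ is a kernel; the class of $E$ for which it holds is a $\lambda$-system containing the $\pi$-system of cylinders, so by Dynkin's lemma it is all of $\Age{1}$, and since each $\xi(x_0,\cdot)$ is a probability measure, $\xi$ is indeed a Markov kernel from $X_0$ to $\Xge{1}$. The identity ${\piun}_*\xi = P_n$ holds by construction. For uniqueness, any kernel $\xi'$ with the same projections agrees with $\xi$ on the generating $\pi$-system of cylinders and yields probability measures, hence the two coincide by the uniqueness of measures agreeing on a generating $\pi$-system.
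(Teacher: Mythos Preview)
Your proof is correct and follows essentially the same route as the paper: define the finite-dimensional kernels, check projectivity, build the additive content on cylinders, and prove continuity at $\empty$ via the diagonal conditioning argument (the paper's Lemma~\ref{lem:tendsto_zero}), then wrap up measurability and uniqueness with a $\pi$-$\lambda$ argument. The only cosmetic difference is that you halve $\eps$ at each step, whereas the paper observes that $\int g\,\mathrm{d}\kappa_0(x_0)\ge\eps$ for a probability measure already forces the existence of a point with $g\ge\eps$, so the same $\eps$ can be retained throughout the induction; your weaker bound is harmless since all that is needed at the end is strict positivity.
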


	Let us give first some intuition on the statement of this theorem. Consider a trajectory starting at a point $x_0 \in X_0$. Then $\kappa_0(x_0)$ gives a probability distribution in $X_1$ which represents the distribution of the next position of the trajectory in $X_1$. Say that we pick randomly a point $x_1 \in X_1$ according to this distribution. Then $\kappa_1(x_0, x_1)$ will give the distribution of the next position in $X_2$, and so on. By iterating this reasoning, applying each kernel successively will output the distribution of a trajectory in $\Xint{1,n}$ given the starting point $x_0$. This is formally given by the kernel $\kappa_0 \oxk ... \oxk \kappa_{n-1}$.

	The natural question is then to wonder whether these successive compositions of kernels can be repeated "infinitely many times" in order to get a kernel which given $x_0$ would output the distribution of the infinite trajectory in $\Xge{1}$. The answer is yes, and this kernel is the kernel $\xi$ given by Theorem~\ref{thm:it}.

	By taking each kernel to be constant we easily get the product of a countable family of probability measures, which can then be used to build the product of an arbitrary family of probability measures (this is presented in Section~\ref{sec:prod-measure}). Likewise if we take $X_n = X$ and $\kappa_n = \kappa : X \kernel X$ for all $n$ we get a Markov kernel from $X$ to $\Xge{1}$ which, if we compose it with a measure $\mu$ at the beginning, gives the measure associated to a Markov chain with initial distribution $\mu$ and transition kernel $\kappa$.

	We will now give a sketch of the proof of the theorem to highlight the elements which are required for the formalization. The idea is to apply Carathédory's theorem (Theorem~\ref{thm:carath}) using Lemma~\ref{lem:cont-empty}. This proof can be found at Theorem~8.24 in \cite{Kal2021}. See also Proposition~10.6.1 in \cite{Cohn2013} for a well detailed proof in the particular case of constant kernels.

	In what follows we consider $(X_n, \A_n)_{n\in\N}$ a family of measurable spaces and a family of Markov kernels $(\kappa_n : \Xlen \kernel X_{n+1})_{n\in\N}$, and we set
	$$\eta_{a,b} := \kappa_a \oxk ... \oxk \kappa_{b-1} : \Xle{a} \kernel \Xint{a+1,b}.$$

	\begin{defi}\label{def:cylindres}
		Sets of the form $A \times \Xgtn$, where $n \ge 1$ and $A \in \Alen$, are called measurable cylinders.
	\end{defi}

	By writing $A \times \Xgtn = \pi_n\inv(A)$ it is easy to check that the set of measurable cylinders is a ring of sets (see Definition~\ref{def:ring}) which generates the product $\sigma$-algebra. We now have to define an additive content over measurable cylinders which we will then extend to the whole $\sigma$-algebra. We also do this for measurable cylinders in $\Xge{n}$ for $n\ge1$ as we will need it for the extension afterwards.

	\begin{defi}\label{def:content}
		Let $a \in \N$ and $x \in \Xlea$. If $A \in \Aint{a+1,b}$, we set
		$$P_a(x, A \x \Xgtn) := \eta_{a,b}(x, A).$$
		This map is well defined because if $b \le c$ then $\eta_{a,c}\prth{x, A \times \Xint{b+1,c}} = \eta_{a,b}(x, A)$.
	\end{defi}

	\begin{lem}\label{lem:add-cont}
		For any $a \in \N$ and any $x \in \Xlea$, the map $P_a(x, \cdot)$ is an additive content over the measurable cylinders of $\Xgt{a}$ (see Definition~\ref{def:add-cont}).
	\end{lem}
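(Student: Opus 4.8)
The plan is to verify the two defining properties of an additive content directly, the crux being a reduction of any finite family of cylinders to a common ``level''. First, for the empty set, I would observe that $\empty = \empty \x \Xgt{a+1}$ is a measurable cylinder of $\Xgt{a}$ with base $\empty \in \Aint{a+1,a+1}$, so that $P_a(x, \empty) = \eta_{a,a+1}(x, \empty) = 0$, using that $\eta_{a,a+1}(x, \cdot)$ is a probability measure and hence assigns $0$ to the empty set.

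For finite additivity, suppose $C_1, \dots, C_m$ are pairwise disjoint cylinders of $\Xgt{a}$ whose union $C$ is again such a cylinder. Write $C_i = A_i \x \Xgt{n_i}$ with $n_i \ge a+1$ and $A_i \in \Aint{a+1, n_i}$, and $C = A \x \Xgt{N}$. The key step is to rewrite all of them at one common level $n := \max(n_1, \dots, n_m, N)$: setting $A_i' := A_i \x \Xint{n_i+1, n} \in \Aint{a+1, n}$, one has $C_i = A_i' \x \Xgt{n}$, and the consistency property recorded in Definition~\ref{def:content} gives the crucial value-preserving equality $P_a(x, C_i) = \eta_{a,n}(x, A_i')$. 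Likewise $C = A' \x \Xgt{n}$ with $P_a(x, C) = \eta_{a,n}(x, A')$.

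Once everything lives at the single level $n$, the set-theoretic hypotheses transfer to the base sets: since $\Xgt{n}$ is a common factor, pairwise disjointness of the $C_i$ forces pairwise disjointness of the $A_i'$, and $C = \bigcup_i C_i$ forces $A' = \bigcup_i A_i'$. Finite additivity then follows from the fact that $\eta_{a,n}(x, \cdot)$ is a genuine measure:
$$P_a(x, C) = \eta_{a,n}\prth{x, \bigcup_{i=1}^m A_i'} = \sum_{i=1}^m \eta_{a,n}(x, A_i') = \sum_{i=1}^m P_a(x, C_i).$$
I expect the main obstacle to be the common-level reduction: one must verify that rewriting $A_i \x \Xgt{n_i}$ as $\prth{A_i \x \Xint{n_i+1,n}} \x \Xgt{n}$ leaves both the underlying subset of $\Xgt{a}$ and the value $P_a(x, \cdot)$ unchanged, and that disjointness and unions of cylinders correspond to those of their bases (handling in particular the degenerate case where $\Xgt{n}$ is empty). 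Everything else reduces to the additivity of the measure $\eta_{a,n}(x, \cdot)$.
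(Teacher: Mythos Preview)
Your argument is correct and is essentially the standard one: the paper does not spell out a proof of this lemma, but in Section~\ref{sec:forma} it observes that the family $(\eta_{a,b})_b$ is projective (Lemma~\ref{lem:eta-proj}) and that any projective family induces an additive content on cylinders (Definition~\ref{def:cylinder-content}); your common-level reduction is precisely the content of that general construction, unwound by hand. One small remark: the degenerate case you flag, where $\Xgt{n}$ might be empty, cannot actually occur here, since the existence of $x \in \Xlea$ and of the Markov kernels $\kappa_n$ forces every $X_k$ to be nonempty.
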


	To apply Theorem~\ref{thm:carath} it remains to prove that $P_0(x_0,\cdot)$ is $\sigma$-sub-additive over measurable cylinders. Thanks to Lemma~\ref{lem:cont-empty}, it suffices to show the following lemma:

	\begin{lem}\label{lem:tendsto_zero}
		Let $x_0 \in X_0$. If $(A_n)_{n\in\N}$ is a non-increasing sequence of measurable cylinders with empty intersection then $P_0(x_0, A_n) \tendl{n\to\infty} 0$.
	\end{lem}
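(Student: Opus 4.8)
The plan is to argue by contradiction and to reconstruct a point of the (supposedly empty) intersection. Since $(A_n)$ is non-increasing and $P_0(x_0,\cdot)$ is an additive content over a ring, hence monotone, the sequence $\prth{P_0(x_0,A_n)}_n$ is non-increasing and bounded below by $0$, so it converges to some $\eps \ge 0$. Assume for contradiction that $\eps > 0$. Writing each cylinder as $A_n = B_n \x \Xgt{b_n}$ with $B_n \in \Aint{1,b_n}$, one may assume after padding (using the well-definedness in Definition~\ref{def:content}) that the sequence $(b_n)$ is non-decreasing; if it stays bounded the claim follows at once from continuity from above of the finite measure $\eta_{0,b}(x_0,\cdot)$, so I will assume $b_n \to \infty$.

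The core of the proof is a family of conditional functions. For each $n$ and each $p \le b_n$, define
$$h^{(p)}_n(x_0,\ldots,x_p) := \eta_{p,b_n}\bigl((x_0,\ldots,x_p),\,(B_n)_{(x_1,\ldots,x_p)}\bigr),$$
the kernel $\eta_{p,b_n}$ evaluated at the section of $B_n$ along the coordinates $1,\ldots,p$. These functions are measurable, take values in $[0,1]$, and satisfy $h^{(0)}_n(x_0) = P_0(x_0,A_n)$. Unfolding the composition-product identity $\eta_{p,b_n} = \kappa_p \oxk \eta_{p+1,b_n}$ through Definition~\ref{def:compProd} yields the recursion
$$h^{(p)}_n(x_0,\ldots,x_p) = \int_{X_{p+1}} h^{(p+1)}_n(x_0,\ldots,x_p,x_{p+1})\,\kappa_p\bigl((x_0,\ldots,x_p),\mathrm{d}x_{p+1}\bigr).$$
Because $A_{n+1} \subseteq A_n$ forces the inclusion of all the relevant sections, each $h^{(p)}_n$ is non-increasing in $n$; being bounded, it converges pointwise to a limit $H^{(p)} := \lim_n h^{(p)}_n$. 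Passing the recursion to the limit by dominated convergence (the integrands are bounded by $1$ and $\kappa_p$ is a probability measure) gives
$$H^{(p)}(x_0,\ldots,x_p) = \int_{X_{p+1}} H^{(p+1)}(x_0,\ldots,x_p,x_{p+1})\,\kappa_p\bigl((x_0,\ldots,x_p),\mathrm{d}x_{p+1}\bigr), \qquad H^{(0)}(x_0) = \eps > 0.$$

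I would then extract a trajectory by a greedy induction based on the elementary averaging principle: if $f \ge 0$ integrates to at least $\eps$ against a probability measure $\mu$, then $f \ge \eps$ at some point, for otherwise $\eps - f > 0$ everywhere forces $\int(\eps - f)\,\mathrm{d}\mu > 0$. Starting from $H^{(0)}(x_0) = \eps$ and applying this principle to the recursion level by level, I build points $x_1^*, x_2^*, \ldots$ with $H^{(p)}(x_0, x_1^*, \ldots, x_p^*) \ge \eps$ for every $p$. To conclude, fix $n$ and evaluate at level $p = b_n$: there the kernel $\eta_{b_n,b_n}$ is over an empty index set, so $h^{(b_n)}_n(x_0, x_1^*, \ldots, x_{b_n}^*) = \ind{B_n}(x_1^*, \ldots, x_{b_n}^*)$, and since $H^{(b_n)} \le h^{(b_n)}_n$ we obtain $\ind{B_n}(x_1^*, \ldots, x_{b_n}^*) \ge \eps > 0$, i.e.\ $(x_1^*,\ldots,x_{b_n}^*) \in B_n$, hence $(x_1^*, x_2^*, \ldots) \in A_n$. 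As this holds for all $n$, we get $(x_1^*, x_2^*, \ldots) \in \bigcap_n A_n = \empty$, a contradiction.

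The main obstacle is the construction and control of the conditional functions $h^{(p)}_n$: identifying the correct recursion through the composition-product, establishing their joint measurability, and above all passing the recursion to the pointwise limit $H^{(p)}$ so that the greedy extraction can proceed. The trajectory-building induction itself is short, but it is exactly where the infinitely many integrations are tied together, and where the probabilistic nature of the kernels (total mass $1$) is essential.
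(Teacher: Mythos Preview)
Your argument is correct and follows essentially the same route as the paper: contraposition, conditional functions obtained by peeling off one kernel at a time via the composition-product recursion, passage to the limit by dominated convergence, and a greedy extraction of a trajectory using the averaging principle against a probability kernel. The only organizational difference is that you define all the levels $h^{(p)}_n$ and their limits $H^{(p)}$ upfront and then extract the trajectory, whereas the paper interleaves the two, finding $x_1$ from the limit at level $1$ and then writing ``repeat the same step''; the content is the same.
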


	\begin{proof}
		For any $n$ we write $A_n := B_n \times \Xgt{a_n}$, with $B_n \in \Aint{1,a_n}$. In this proof if $A \subseteq \Xge{1}$ and $(x_1, ..., x_n) \in \Xint{1,n}$ we will denote by $A(x_1,...,x_n) := \set{y \in \Xgtn : (x_1,...,x_n,y) \in A}$ the associated section of $A$.

		We use contraposition. Assume there exists $\eps>0$ such that for any $n \in \N$, $P_0(x_0,A_n) \ge \eps$. We wish to prove that the intersection of $A_n$ is nonempty. We define the following sequence of functions:
		\begin{align*}
			f_n : X_1 &\to [0, 1] \\
			x_1 &\mapsto P_1(x_0,x_1, A_n(x_1)).
		\end{align*}
		It allows us to write
		\begin{equation}\label{eq:pzero-eq-intf}
			P_0(x_0,A_n) = \int_{X_1} f_n(x_1) \kappa_0(x_0, \dx_1).
		\end{equation}
		Because $(A_n)$ is non-increasing it is also the case of $(f_n)$. We set $g := \lim_{n\to\infty} f_n$. By the dominated convergence theorem we get $\eps \le P_0(x_0,A_n) \tendl{n\to\infty} \int_{X_1} g(x_1) \kappa_0(x_0,\dx_1)$. As $\kappa_0(x_0, \cdot)$ is a probability measure there exists $x_1 \in X_1$ such that $g(x_1) \ge \eps$, and so for any $n\in\N$ we have $f_n(x_1) \ge \eps$ which implies that $P_1(x_0,x_1, A_n(x_1)) \ge \eps$.

		We can thus repeat the same step replacing $P_0(x_0,\cdot)$ by $P_1(x_0,x_1,\cdot)$ and $A_n$ by $A_n(x_1)$. We get a sequence $x := (x_k)_{k\ge1}$ such that for any $n\in\N$ and $k\ge1$ we have
		$$P_k(x_0,...,x_k,A_n(x_1,...,x_k)) \ge \eps.$$
		We claim that $x \in \bigcap_{n\in\N}A_n$, which will conclude the proof.

		Let $n\in\N$. As $A_n = B_n \times \Xgt{a_n}$ we have that $A_n(x_1,...,x_{a_n})$ is either $\Xgt{a_n}$ or empty according to whether $(x_1,...,x_{a_n})$ belongs to $B_n$ or not. But because $$P_{a_n}(x_0,...,x_{a_n},A_n(x_1,...,x_{a_n})) \ge \eps > 0$$ we have $A_n(x_1,...,x_{a_n}) = \Xgt{a_n}$,
		which proves that $x \in A_n$.
	\end{proof}

	This concludes the main argument of the proof. The measurability and uniqueness can be easily provided by applying the $\pi$-$\lambda$ theorem. We will now focus on those aspects of the proof which proved difficult to formalize and how they were overcome.

	\section{Formalization of the theorem}\label{sec:forma}

	In this section we present different aspects of the proof above which were not straightforward to formalize and the solutions which were found to overcome them. The main obstacles come from the fact that we are manipulating dependent types. The product type $\prod_{n\in\N} X_n$ is represented in Lean as the type of functions $f$ such that for any $n\in\N$ the image $f(n)$ has type $X_n$. Although the notion of "having a type" can be intuitively linked to "belonging to a set" in set theory, this is actually much more rigid. In particular saying that $f(n)$ has type $X_{n+1-1}$ will not work in Lean because the type $X_{n+1-1}$ is not definitionally equal to $X_n$. We will not go into more details but this brings a significant amount of challenges which were not addressed in the construction with Isabelle/HOL mentioned in the introduction because dependent types do not exist in Isabelle/HOL.

	\subsection{Composition of kernels}\label{sub:comp-ker}

	One of the key tools for the Ionescu-Tulcea theorem is to be able to speak about the composition-product of Markov kernels. Indeed that is necessary even to be able to state the theorem. However formalizing this in a way which allows us to smoothly write the proof above proved to be quite challenging.

	One common rule in formalization is that if we use "..." in a proof it will likely be hard to formalize. In our case the statement of Theorem~\ref{thm:it} itself makes use of these when we write the composition of kernels, and they hide a real problem.

	Transition kernels and their different kinds of composition are defined in \mathlib. However we need to take the composition of a finite family of kernels which does not exist in \mathlib, and here is why. By hand, if we have three Markov kernels $\kappa : X \kernel Y$, $\eta : X \x Y \kernel Z$ and $\xi : X \x Y \x Z \kernel T$ we can define $\kappa \oxk \eta \oxk \xi := (\kappa \oxk \eta) \oxk \xi$. Then we easily get $\kappa \oxk \eta \oxk \xi = \kappa \oxk (\eta \oxk \xi)$. But here we forget a detail: $X \x Y \x Z$ does not mean anything a priori. We can write $(X \x Y) \x Z$ or $X \x (Y \x Z)$ but those are two different types. In paper maths they are canonically isomorphic and we often act as though they were the same but in Lean this isomorphism has to be made explicit.

	Coming back to kernels one can write that $\xi$ is a kernel from $X \x (Y \x Z)$ to $T$. Then $(\kappa \oxk \eta) \oxk \xi$ is a kernel from $X$ to $(Y \x Z) \x T$. But in that case $\kappa \oxk (\eta \oxk \xi)$ is not even well defined because $\xi$ should start from $(X \x Y) \x Z$. If we were to add an isomorphism to define this object anyway the equality $(\kappa \oxk \eta) \oxk \xi = \kappa \oxk (\eta \oxk \xi)$ would still make no sense because on the left we have a kernel from $X$ to $(Y \x Z) \x T$ while on the right it is a kernel from $X$ to $Y \x (Z \x T)$, so they do not have the same type.

	A similar issue arises when formalizing Theorem~\ref{thm:it}: if we consider two Markov kernels $\kappa : X_0 \kernel \Xint{1,a}$ and $\eta : \Xlea \kernel \Xint{a+1,b}$, their composition does not make sense, we need isomorphisms. First to see $\eta$ as a kernel from $X_0 \x \Xint{1,a}$ to $\Xint{a+1,b}$ to take the composition $\kappa \oxk \eta : X_0 \kernel \Xint{1,a} \x \Xint{a+1,b}$, then to see $\kappa \oxk \eta$ as a kernel from $X_0$ to $\Xint{1,b}$. For instance the identity $\eta_{a,b} \oxk \eta_{b,c} = \eta_{a,c}$ that we used repeatedly in the proof does not make sense in Lean. To solve this problem we need to make these isomorphisms explicit and then hide them inside a black box to be able to use it without worrying about the implementation.
	\vspace{\baselineskip}

	Apart from using explicit isomorphisms, a key idea which allowed to develop a black box was the following: instead of considering $\eta_{a,b} := \kappa_a \oxk ... \oxk \kappa_{b-1} : \Xle{a} \kernel \Xint{a+1,b}$, which only makes sense if $a < b$, we consider a more general family of kernels. For any $a, b \in \N$, the kernel $\eta_{a,b}$ will be a Markov kernel from $\Xle{a}$ to $\Xle{b}$. Its input will be some trajectory up to time $a$, and its output will be the distribution of the trajectory up to time $b$ given the trajectory up to time $a$. In particular if $b \le a$ then $\eta_{a, b}$ will just be a deterministic kernel which restricts the trajectory to times smaller than $b$. More generally, if $x \in \Xle{a}$, then $\eta_{a, b}(x)$ will consist of two parts: on $\Xle{a}$ it is just deterministically equal to $x$, and on $\Xint{a+1,b}$ it behaves in the same way as $\kappa_a \oxk ... \oxk \kappa_{b-1}$. The main motivation for this construction is that without any isomorphisms it makes sense to write $\eta_{b,c} \o \eta_{a, b}$, or to wonder whether the latter is equal to $\eta_{a,c}$.

	This idea of only manipulating types of the form $\Xlea$ instead of $\Xint{a,b}$ also significantly reduces the use of different subtypes, avoiding some serious subtype hell. The construction in Isabelle/HOL goes even further than that by only manipulating kernels from $\prod_{n\in\N} X_n$ to $\prod_{n\in\N} X_n$, with different measurability assumptions. If we were to transpose this construction in Lean it would completely get rid of the subtypes issues, but would land a less natural construction. Indeed, at the end of the day we get a kernel $\xi_a : \Xlea \kernel X$ which given the trajectory up to time $a$ outputs the whole trajectory. If we were to rather consider a kernel from $X$ to $X$ we would need to input a whole trajectory and only keep the first steps, which seems less natural. In particular, given a probability measure $\mu$ over $\Xlea$, the measure $\xi_a \o \mu$ will naturally land a measure over $X$ whose first coordinates follow $\mu$ and the others are obtained by iterating our kernels $\kappa$. This seems to be the most natural way to build a Markov chain given a transition kernel and an initial measure.

	Let us now present the construction. In what follows, we denote by $\delta$ the identity kernel, i.e.\ the Markov kernel which to a point $x$ associates the Dirac mass at $x$ written $\delta_x$. The kernel $\eta_{a, b}$ is defined by induction on $b$. If $b \le a$ then it is deterministic as explained previously. If $a \le b$, we set
	$$\eta_{a,b+1} := (\delta \x \kappa_b) \o \eta_{a, b}.$$
	This can be interpreted as follows. Consider $x \in \Xle{a}$. Then $\eta_{a, b}(x)$ is the distribution of the trajectory up to time $b$, and we are looking for the trajectory up to time $b + 1$. To do so, we put the identity on the first coordinates and we get the last coordinate by composing with $\kappa_b$.

	Using isomorphisms to be perfectly formal, denote
	$$\phi_a : X_{b+1} \to \prod_{b < k \le b + 1} X_k \qquad \text{and} \qquad \psi_{m, n} : \Xle{m} \x \prod_{m < k \le n} X_k \to \Xlen$$
	the two obvious isomorphisms. Then we define
	\begin{equation}\label{eq:def-eta}
		\eta_{a,b+1} := {\psi_{b,b+1}}_*((\delta \x {\phi_b}_*\kappa_b) \o \eta_{a, b}).
	\end{equation}

	This construction is formalized as follows in \mathlib:
	\begin{lstlisting}
def partialTraj (κ : (n : ℕ) → Kernel (Π i : Iic n, X i) (X (n + 1)))
		(a b : ℕ) : Kernel (Π i : Iic a, X i) (Π i : Iic b, X i) :=
	if h : b ≤ a then deterministic (frestrictLe₂ h) (measurable_frestrictLe₂ h)
	else @Nat.leRec a (fun b _ ↦ Kernel (Π i : Iic a, X i) (Π i : Iic b, X i))
		Kernel.id
		(fun k _ η_k ↦ ((Kernel.id ×ₖ ((κ k).map (piSingleton k))) ∘ₖ η_k).map
		(IicProdIoc k (k + 1))) b (Nat.le_of_not_le h)\end{lstlisting}

	The declaration \lean{Iic n} stands for $\dbrack{0,n}$.
	One will in particular notice
	\begin{lstlisting}
((Kernel.id ×ₖ ((κ k).map (piSingleton k))) ∘ₖ η_k).map (IicProdIoc k (k + 1))\end{lstlisting}
	which corresponds to Equation~\eqref{eq:def-eta}: \texttt{$\eta$\_k} stands for $\eta_{a,b}$, \texttt{$\kappa$ k} stands for $\kappa_b$, \texttt{Kernel.id} stands for $\delta$, \texttt{piSingleton} stands for $\phi$ and \texttt{IicProdIoc} stands for $\psi$.

	As mentioned above, this construction allows to state and prove natural results.
	\begin{lem}\label{lem:eta-comp}
		If $a \le b \le c$ we have $\eta_{b,c} \o \eta_{a, b} = \eta_{a,c}$.
	\end{lem}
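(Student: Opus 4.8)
The plan is to argue by induction on $c$, starting from $c = b$, since both $\eta_{a,c}$ and $\eta_{b,c}$ are themselves defined by recursion on their second index through Equation~\eqref{eq:def-eta}. Before running the induction it is convenient to isolate the \emph{one-step} kernel
$$\lambda_c := {\psi_{c,c+1}}_*(\delta \x {\phi_c}_*\kappa_c) : \Xle{c} \kernel \Xle{c+1},$$
and to rewrite the recursive definition in the form $\eta_{a,c+1} = \lambda_c \o \eta_{a,c}$ whenever $a \le c$. This rewriting is exactly the statement that the push-forward commutes with composition on the right, i.e.\ $f_*(\sigma \o \tau) = (f_*\sigma) \o \tau$, applied with $f = \psi_{c,c+1}$, $\sigma = \delta \x {\phi_c}_*\kappa_c$ and $\tau = \eta_{a,c}$. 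This is a general fact relating the push-forward kernel of Definition~\ref{def:ker-map} to the composition of Definition~\ref{def:comp}, and I would establish it (or invoke it from \mathlib) as a preliminary rewrite lemma.

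For the base case $c = b$, the kernel $\eta_{b,b}$ is by definition the deterministic kernel attached to the restriction map $\Xle{b} \to \Xle{b}$, which is the identity; hence $\eta_{b,b} = \delta$ and $\eta_{b,b} \o \eta_{a,b} = \delta \o \eta_{a,b} = \eta_{a,b}$, using the left-identity property of $\delta$. For the inductive step, assume $\eta_{b,c} \o \eta_{a,b} = \eta_{a,c}$ with $b \le c$. Since $b \le c$ and $a \le b \le c$, both one-step reformulations apply, and together with associativity of kernel composition I compute
$$\eta_{b,c+1} \o \eta_{a,b} = (\lambda_c \o \eta_{b,c}) \o \eta_{a,b} = \lambda_c \o (\eta_{b,c} \o \eta_{a,b}) = \lambda_c \o \eta_{a,c} = \eta_{a,c+1},$$
where the third equality is the induction hypothesis. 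So the three ingredients are: push-forward commutes with composition, associativity of composition, and the left-identity law for $\delta$.

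The main obstacle, as the surrounding discussion makes clear, is not the one-line computation above but making every object typecheck. Both associativity $(\lambda_c \o \eta_{b,c}) \o \eta_{a,b} = \lambda_c \o (\eta_{b,c} \o \eta_{a,b})$ and the push-forward identity must be available in the precise dependent-type form needed here, where the intermediate spaces $\Xle{a}$, $\Xle{b}$, $\Xle{c}$, $\Xle{c+1}$ are genuinely distinct types and the isomorphisms $\phi_c$, $\psi_{c,c+1}$ sit inside $\lambda_c$. Moreover the induction is not a plain induction on $\N$ but an induction \emph{from $b$ upwards}, matching the \lean{Nat.leRec} used in the definition of \lean{partialTraj}; the natural tool is \lean{Nat.le_induction}, and the effort goes into unfolding the recursor correctly in the successor case and checking that the hypothesis $b \le c$ keeps us in the non-deterministic branch of the definition. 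Once the clean rewrite lemma $\eta_{a,c+1} = \lambda_c \o \eta_{a,c}$ (for $a \le c$) is in place, the remaining induction reduces to the short algebraic manipulation displayed above.
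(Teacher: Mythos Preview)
Your proposal is correct and matches the approach implicit in the paper. The paper does not spell out a textual proof of this lemma; it only states it and gives the corresponding \texttt{Mathlib} declaration, relying on the inductive definition of $\eta$ via \texttt{Nat.leRec}. Your induction on $c \ge b$, after isolating the one-step kernel $\lambda_c$ and rewriting the recursion as $\eta_{a,c+1} = \lambda_c \o \eta_{a,c}$, is precisely the argument that this definition invites, and your remarks about \texttt{Nat.le\_induction}, associativity of kernel composition, and the push-forward/composition compatibility identify exactly the ingredients the formal proof uses.
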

	\begin{lstlisting}
theorem partialTraj_comp_partialTraj (hab : a ≤ b) (hbc : b ≤ c) :
		partialTraj κ b c ∘ₖ partialTraj κ a b = partialTraj κ a c\end{lstlisting}
	For what follows we need one technical result about $\eta$, and it is the fact that up to an isomorphism it is a product of kernels, the first part being the identity kernel. We don't go into the the details of this lemma, but it is interesting because it gives for free the following result:
	\begin{lem}\label{lem:eta-proj}
		If $b \le c$, ${\pi_{c \to b}}_* \eta_{a,c} = \eta_{a, b}$.
	\end{lem}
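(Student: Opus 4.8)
The plan is to prove the statement by induction on $c \ge b$, after first isolating a single one-step identity. First I would establish the one-step lemma: for every $a, c \in \N$,
$${\pi_{c+1\to c}}_* \eta_{a,c+1} = \eta_{a,c}.$$
To prove it I case split according to the branch of the definition of $\eta$. If $c < a$, both $\eta_{a,c+1}$ and $\eta_{a,c}$ are the deterministic restrictions $\pi_{a\to c+1}$ and $\pi_{a\to c}$, and the identity reduces to the composition of restriction maps $\pi_{c+1\to c}\o\pi_{a\to c+1} = \pi_{a\to c}$, together with the fact that pushing a deterministic kernel forward by a measurable map simply composes the two maps. If $a \le c$, I unfold the defining Equation~\eqref{eq:def-eta}, $\eta_{a,c+1} = {\psi_{c,c+1}}_*\prth{(\delta \x {\phi_c}_*\kappa_c)\o\eta_{a,c}}$. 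The key geometric observation is that the restriction $\pi_{c+1\to c}$ precomposed with the gluing isomorphism $\psi_{c,c+1}$ is exactly the first projection $\prun : \Xle{c}\x\prod_{c<k\le c+1}X_k \to \Xle{c}$. Using functoriality of the push-forward, $(g\o f)_* = g_*\o f_*$, this rewrites ${\pi_{c+1\to c}}_*\eta_{a,c+1}$ as ${\prun}_*\prth{(\delta \x {\phi_c}_*\kappa_c)\o\eta_{a,c}}$.

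Then I would apply two elementary facts about kernels. Writing a push-forward as composition with a deterministic kernel and invoking associativity of composition (Definition~\ref{def:comp}) gives ${\prun}_*(K\o\eta_{a,c}) = ({\prun}_*K)\o\eta_{a,c}$ for $K = \delta\x{\phi_c}_*\kappa_c$. Since ${\phi_c}_*\kappa_c$ is a Markov kernel, taking the first marginal of the product kernel $K$ yields ${\prun}_*(\delta\x{\phi_c}_*\kappa_c) = \delta$, the identity kernel. Hence the whole expression collapses to $\delta\o\eta_{a,c} = \eta_{a,c}$, which proves the one-step lemma.

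With this in hand, the full statement follows by induction on $c$ starting from $c = b$. The base case $c = b$ is immediate because $\pi_{b\to b}$ is the identity. For the inductive step I factor the restriction as $\pi_{c+1\to b} = \pi_{c\to b}\o\pi_{c+1\to c}$ and use functoriality of the push-forward once more:
$${\pi_{c+1\to b}}_*\eta_{a,c+1} = {\pi_{c\to b}}_*\prth{{\pi_{c+1\to c}}_*\eta_{a,c+1}} = {\pi_{c\to b}}_*\eta_{a,c} = \eta_{a,b},$$
where the middle equality is the one-step lemma and the last one is the induction hypothesis.

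I expect the main obstacle to be the bookkeeping forced by the gluing isomorphisms $\psi$ and $\phi$ hidden inside the definition of $\eta$: the clean identities $\pi_{c+1\to c}\o\psi_{c,c+1} = \prun$ and ${\prun}_*(\delta\x\kappa') = \delta$ are geometrically obvious but, in a dependently typed setting, demand careful tracking of the measurable equivalences and of the associated \texttt{map} and \texttt{comp} rewriting. This is precisely why it is preferable to derive the result \emph{for free} from the already-established product structure of $\eta$: once $\eta_{a,c}$ is known to be the $\psi_{a,c}$-pushforward of a product $\delta\x\lambda_{a,c}$, the projection $\pi_{c\to b}$ only truncates the product factor, and the statement reduces to the marginal consistency of that factor.
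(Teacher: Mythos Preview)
Your proof is correct. The one-step lemma is sound in both branches of the case split, the identity $\pi_{c+1\to c}\o\psi_{c,c+1}=\prun$ holds, the rewriting ${f}_*(K\o L)=({f}_*K)\o L$ is valid for push-forward of kernel compositions, and the first-marginal computation ${\prun}_*(\delta\x\kappa')=\delta$ uses that $\kappa'$ is Markov. The induction then goes through as you wrote it.

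The route, however, differs from the paper's. The paper does not argue by induction on $c$; instead it first proves a separate technical lemma stating that $\eta_{a,c}$ is, up to the gluing isomorphism, a product kernel whose first factor is the identity $\delta$ on $\Xle{a}$. Once this structural fact is in hand, Lemma~\ref{lem:eta-proj} drops out \emph{for free}: projecting $\pi_{c\to b}$ simply truncates the second factor of the product and one reads off the answer without touching the recursive definition again. You yourself identify this alternative in your closing paragraph, and you correctly anticipate the trade-off: your direct induction avoids stating and proving the product-structure lemma but forces you to unfold Equation~\eqref{eq:def-eta} and chase the isomorphisms $\phi$ and $\psi$ by hand, which is exactly the dependent-type bookkeeping the paper's approach is designed to sidestep. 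Conversely, the paper's route pays an upfront cost (the product-structure lemma) but then reuses that lemma elsewhere and keeps the proof of Lemma~\ref{lem:eta-proj} essentially free of isomorphism manipulation.
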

	\begin{lstlisting}
theorem partialTraj_map_frestrictLe₂ (a : ℕ) (hbc : b ≤ c) :
	(partialTraj κ a c).map (frestrictLe₂ hbc) = partialTraj κ a b\end{lstlisting}
	In other words, given the distribution of the trajectory up to time $c$, if we restrict it to times smaller than $b$ we get the distribution of the trajectory up to time $b$.

	This family of kernels $\eta$ thus provides a nice tool to state Ionescu-Tulcea theorem. What we will do is that for a given $a \in \N$ we will build a kernel $\xi_a : \Xle{a} \kernel X$ satisfying:
	$$\forall b \in \N, {\pi_b}_* \xi_a = \eta_{a, b}.$$
	We give more details in Section~\ref{sub:result}.

	\subsection{Projective family of measure}

	The proof of Theorem~\ref{thm:it} heavily relies on Theorem~\ref{thm:carath} which was introduced in \mathlib~by Rémy Degenne and Peter Pfaffelhuber. They did it to then be able to formalize Kolmogorov's extension theorem, and to do so they provided different tools which were useful to formalize Theorem~\ref{thm:it}. We now present those notions and how they were used in our particular case.

	\begin{defi}\label{def:projective}
		Let $(X_i, \A_i)_{i \in \iota}$ be a family of measurable spaces. Assume that for any finite set $I \subseteq \iota$ we have a measure $\mu_I$ over $X^I$. The measure family $\mu$ is said to be \emph{projective} if for any finite subsets $J \subseteq I \subseteq \iota$ we have ${\pi_{I \to J}}_*\mu_I = \mu_J$.
	\end{defi}

	\begin{defi}\label{def:limite-proj}
		Let $(X_i, \A_i)_{i \in \iota}$ be a family of measurable spaces and let $\mu$ be a projective family of measures over $X$. A \emph{projective limit} of $\mu$ is a measure $\nu$ over $X$ such that for any finite set $I$ we have ${\pi_I}_*\nu = \mu_I$.
	\end{defi}

	It so happens that when given a projective family of measures one can naturally build an additive content over the measurable cylinders of $X$.

	\begin{defi}\label{def:cylinder-content}
		Let $(X_i, \A_i)_{i \in \iota}$ be a family of measurable spaces and let $\mu$ be a projective family of measures over $X$. Let $I$ be a finite subset of $\iota$ and $A \in \A^I$. We set
		$$P(\pi_I\inv (A)) := \mu_I(A).$$
		The projectivity of $\mu$ ensures that this defines an additive content over measurable cylinders of $X$.
	\end{defi}

	Taking a look at the statement and the proof of Theorem~\ref{thm:it}, one will notice that this looks a lot like what is done here: the family of kernels $\eta_{a, b}$ is projective in the sense given by Lemma~\ref{lem:eta-proj}, and we use this to build an additive content over the measurable cylinders. We then wish to extend this content to the product $\sigma$-algebra to get a kernel $\xi_a$ satisfying
	\begin{equation}\label{eq:xi-proj}
		\forall b, {\pi_b}_*\xi_a = \eta_{a, b}.
	\end{equation}
	In other words we want $\xi_a$ to be a kind of projective limit.

	\subsection{Integrating against $\eta$}

	We see that the proof of Lemma~\ref{lem:tendsto_zero} heavily relies on the use of sections of sets. This however is not well suited for formalization purposes because it involves a lot of manipulations with subtypes. For instance, if $A \subseteq \prod_{n\in\N} X_n$ and $x \in X_0$ then $A_x \subseteq \Xge{1}$. So $A_x$ is a set of sequences indexed by $\N_+$ which is not a subset of $\N$ but rather a subtype in Lean. This makes things hard to manipulate because if we want to deal with an element of $\N_+$ we have to provide a proof that it is positive. This would have to be done not only for non-zero integers but also for integers greater than $2$, $3$ and so on with the same troubles each time. Therefore we take a different approach.

	The idea relies on the notion of marginals exposed in Section~4 of \cite{VanD2021}. We adapt their idea as follows: given $a$ and $b$ integers, $f : X \to [0,+\infty]$ and $x \in X$ we define \lean{lmarginalPartialTraj κ a b f x} as
	$$\int_{\Xle{b}} f(y \hookrightarrow_b x) \eta_{a,b}(x_0,...,x_a, \dy),$$
	where $y \hookrightarrow_b x$ is the same as $x$ but the first $b+1$ coordinates are those of $y$. This allows to integrate $f$ against $\eta_{a, b}$ and still consider the output as a function defined over $X$ while it does not depend on coordinates in $\Xint{a+1, b}$. Therefore writing the expression
	$$\texttt{lmarginalPartialTraj $\kappa$ b c (lmarginalPartialTraj $\kappa$ a b f)},$$
	makes sense, in other words we can first integrate between $a$ and $b$ and then between $b$ and $c$. Note that in \mathlib~there are two kinds of integral: the Böchner integral, for functions which take values in a Banach space, and the Lebesgue integral for functions which take values in $[0,+\infty]$. To distinguish them in \mathlib~we refer to the first as \texttt{integral} and to the second as \texttt{lintegral} (the first \texttt{l} standing for Lebesgue). \texttt{lmarginalPartialTraj} is used for functions taking values in $[0,+\infty]$, hence the first \texttt{l} in the name.

	As expected, if $a \le b \le c$ we have
	\begin{lstlisting}
theorem lmarginalPartialTraj_self (hab : a ≤ b) (hbc : b ≤ c)
		{f : (Π n, X n) → ℝ≥0∞} (hf : Measurable f) :
		lmarginalPartialTraj κ a b (lmarginalPartialTraj κ b c f) =
		lmarginalPartialTraj κ a c f\end{lstlisting}
	This in particular allows us to peel off the measure of a measurable cylinder as the integral of its indicator:
	$$P_0(x_0, A_n \x \Xgtn) = \int_{\Xlen} \ind{A_n} \mathrm{d}\eta_{0,n}(x_0) = \int_{X_1} \int_{\Xlen} \ind{A_n}(y) \eta_{1,n}(x_0, x_1,\dy) \kappa_0(x_0,\dx_1).$$
	This is the formalized version of Equation~\eqref{eq:pzero-eq-intf}. The most important thing to notice is this: thanks to the definition of $\eta$, we do not use sections of sets, and therefore we avoid the use of subtypes. This allows us to formalize Lemma~\ref{lem:tendsto_zero}, which is the main ingredient of the proof, quite smoothly.

	\subsection{The resulting kernel}\label{sub:result}

	As explained at the end of Section~\ref{sub:comp-ker}, instead of producing a kernel $\xi : X_0 \kernel \Xge{1}$ as is written in the statement of Theorem~\ref{thm:it}, we build a family of kernels
	$$\xi_a : \Xle{a} \kernel X,$$
	where $\xi_a$ is obtained as the "limit" of the family of kernels $(\eta_{a, b})_{b \in \N}$. This family of kernels is called \lean{traj} (for "trajectory").
	\begin{lstlisting}
def traj (a : ℕ) : Kernel (Π i : Iic a, X i) (Π n, X n)\end{lstlisting}
	Note that this declaration is a \lean{def} rather than a theorem stating "there exists a kernel such that ...". Indeed we define the kernel thanks to Carathéodory's extension theorem but it remains to prove that it is the projective limit we are looking for, i.e.\ that it satisfies Equation~\eqref{eq:xi-proj}. This follows easily from Definition~\ref{def:cylinder-content} and Lemma~\ref{lem:eta-proj}.
	\begin{prop}\label{prop:xi_proj}
		For any $a, b \in \N$ we have ${\pi_b}_*\xi_a = \eta_{a,b}$.
	\end{prop}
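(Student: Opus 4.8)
The plan is to reduce the claimed equality of kernels to an equality of the values of two measures on measurable cylinders, where, by the very construction of $\xi_a$, the Carathéodory extension coincides with the content it extends. Two Markov kernels are equal as soon as they agree at every point, so I would fix $a \in \N$ and $x \in \Xle{a}$ and reduce to showing that the two measures ${\pi_b}_*(\xi_a(x))$ and $\eta_{a,b}(x)$ on $\Xle{b}$ coincide; and two measures coincide as soon as they agree on every measurable set, so I would fix a measurable $A \in \Ale{b}$. Unfolding the push-forward (Definition~\ref{def:ker-map}) gives
\begin{equation*}
    ({\pi_b}_*\xi_a)(x, A) = \xi_a\prth{x, \pi_b\inv(A)},
\end{equation*}
and the key observation is that $\pi_b\inv(A) = A \times \Xgt{b}$ is exactly a measurable cylinder.

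Now $\xi_a(x)$ is, by construction, the measure obtained from Theorem~\ref{thm:carath} applied to the additive content $P_a(x,\cdot)$, and the defining feature of that extension is that it agrees with $P_a(x,\cdot)$ on the ring of cylinders. Therefore
\begin{equation*}
    \xi_a\prth{x, \pi_b\inv(A)} = P_a\prth{x, A \times \Xgt{b}} = \eta_{a,b}(x, A),
\end{equation*}
the last step being nothing but the definition of the cylinder content attached to the projective family $(\eta_{a,b})_b$ (Definition~\ref{def:cylinder-content}, specializing Definition~\ref{def:content}). Combining the two displays yields $({\pi_b}_*\xi_a)(x, A) = \eta_{a,b}(x, A)$ for every measurable $A$, which is the desired identity. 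Note that no appeal to the $\pi$-$\lambda$ theorem is needed here: the push-forward by $\pi_b$ turns an arbitrary measurable test set into a cylinder, and the extension agrees with the content on all cylinders, so equality holds on every measurable set directly. The only substantive ingredient hiding in this computation is the well-definedness of $P_a$, which rests on the projectivity of the family $(\eta_{a,b})_b$ recorded in Lemma~\ref{lem:eta-proj}; since this is already built into Definition~\ref{def:cylinder-content}, it does not have to be reproved, and the proposition becomes a direct unfolding once $\xi_a$ has been defined. The argument is moreover uniform in $b$, requiring no case split on the sign of $b - a$, since $\eta_{a,b}$ and hence $P_a$ are defined for all $b \in \N$.

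I expect the main obstacle to be a matter of interface rather than of mathematics. Concretely, one has to express the step ``the extension agrees with the content on the ring'' through the precise lemmas furnished by the cylinder-content machinery introduced in \mathlib, and, more delicately, one has to thread the isomorphisms hidden inside the black box of Section~\ref{sub:comp-ker} so that the cylinder $\pi_b\inv(A)$, the content $P_a$ defined via $\eta_{a,b}$, and the push-forward by $\pi_b$ all typecheck against the same finite product space $\Xle{b}$. This bookkeeping, rather than any genuine analytic difficulty, is where the formalization effort will concentrate.
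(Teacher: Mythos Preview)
Your proposal is correct and matches the paper's own justification: the paper simply states that the proposition ``follows easily from Definition~\ref{def:cylinder-content} and Lemma~\ref{lem:eta-proj}'', and you have unpacked exactly that argument, testing on an arbitrary measurable $A$, identifying $\pi_b^{-1}(A)$ as a cylinder, and invoking that the Carath\'eodory extension agrees with the projective-family content there. Your remark that no $\pi$-$\lambda$ argument is needed is also on point, and your closing paragraph about the interface-level bookkeeping accurately reflects where the formalization effort actually sits.
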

	Actually, because all the measures are finite here this proposition completely characterizes the kernel $\xi_a$, which allows to easily deduce other results. For instance, let $x \in \Xle{a}$. Then $\xi_a(x)$ gives the distribution of the whole trajectory in $X$. On the other hand, $\eta_{a, b}(x)$ gives the distribution of the trajectory up to time $b$. Thus if we compose $\eta_{a, b}(x)$ with the kernel $\xi_b$ we get another distribution for the whole trajectory which we expect to be equal to the one given by $\xi_a(x)$. We have the following lemma:
	\begin{lem}\label{lem:xi-comp-eta}
		If $a \le b$ then $\xi_b \o \eta_{a, b} = \xi_a$.
	\end{lem}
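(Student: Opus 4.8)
The plan is to exploit the characterization of $\xi_a$ noted right after Proposition~\ref{prop:xi_proj}: since every kernel in sight takes values in probability measures, a kernel $\nu : \Xle{a} \kernel X$ is completely determined by its family of finite-dimensional projections $({\pi_c}_*\nu)_{c\in\N}$. Indeed, the measurable cylinders $\pi_c\inv(A)$ form a $\pi$-system generating the product $\sigma$-algebra, so two kernels into $X$ taking finite values and agreeing on all such cylinders must coincide, by the uniqueness part of the $\pi$-$\lambda$ argument underlying Carathéodory's theorem. Hence, to prove $\xi_b \o \eta_{a, b} = \xi_a$ it suffices to show that
$${\pi_c}_*(\xi_b \o \eta_{a,b}) = {\pi_c}_*\xi_a \qquad \text{for all } c \in \N,$$
and in fact it is enough to check this for $c \ge b$, since the cylinders based on $\dbrack{0,c}$ with $c \ge b$ already generate the whole $\sigma$-algebra (every cylinder based on $\dbrack{0,c}$ is also based on $\dbrack{0,c'}$ whenever $c \le c'$).

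The right-hand side is handled immediately by Proposition~\ref{prop:xi_proj}, which gives ${\pi_c}_*\xi_a = \eta_{a,c}$. For the left-hand side, I would first record the elementary fact that push-forward commutes with composition on the outer factor: for a measurable $f$ and kernels $\kappa, \nu$ of compatible types, $f_*(\nu \o \kappa) = (f_*\nu) \o \kappa$. This is a one-line unfolding of Definitions~\ref{def:ker-map} and~\ref{def:comp}, namely
$$f_*(\nu \o \kappa)(x, C) = \int \nu(y, f\inv(C))\,\kappa(x,\dy) = \int (f_*\nu)(y, C)\,\kappa(x,\dy) = \big((f_*\nu) \o \kappa\big)(x, C).$$
Applying this with $f = \pi_c$, $\nu = \xi_b$ and $\kappa = \eta_{a,b}$, and then invoking Proposition~\ref{prop:xi_proj} for $\xi_b$ (so that ${\pi_c}_*\xi_b = \eta_{b,c}$), I obtain
$${\pi_c}_*(\xi_b \o \eta_{a,b}) = ({\pi_c}_*\xi_b) \o \eta_{a,b} = \eta_{b,c} \o \eta_{a,b}.$$

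Finally, because $a \le b \le c$, Lemma~\ref{lem:eta-comp} yields $\eta_{b,c} \o \eta_{a,b} = \eta_{a,c}$, which coincides with the right-hand side computed above, completing the argument.

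I expect the main obstacle to be not the computation, which is a short chain, but the clean deployment of the uniqueness/characterization statement: one must have available in \mathlib{} (or prove) that a probability kernel into the infinite product $X$ is uniquely determined by its projections ${\pi_c}_*$, and then apply it correctly while respecting the subtlety that the projection identities need only be verified cofinally in $c$. The auxiliary commutation lemma $f_*(\nu \o \kappa) = (f_*\nu) \o \kappa$ is a small but genuinely necessary building block; once both are in place, the equality reduces to a direct application of Proposition~\ref{prop:xi_proj} and Lemma~\ref{lem:eta-comp}.
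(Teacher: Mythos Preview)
Your proposal is correct and follows precisely the route the paper intends: use the characterization of $\xi_a$ by its finite-dimensional projections (Proposition~\ref{prop:xi_proj}), reduce to checking ${\pi_c}_*(\xi_b \o \eta_{a,b}) = \eta_{a,c}$ for $c \ge b$, and conclude via the commutation $f_*(\nu \o \kappa) = (f_*\nu)\o\kappa$ together with Lemma~\ref{lem:eta-comp}. The paper does not spell out the argument, but the sentence preceding the lemma (``this proposition completely characterizes the kernel $\xi_a$, which allows to easily deduce other results'') confirms that this is exactly the intended proof.
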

	\begin{lstlisting}
theorem traj_comp_partialTraj {a b : ℕ} (hab : a ≤ b) :
		(traj κ b) ∘ₖ (partialTraj κ a b) = traj κ a\end{lstlisting}
	Although these results are quite straightforward, the fact that we were able to formalize them with very short proofs in Lean indicates that the way we chose to represent composition of kernels works well. To illustrate this fact even more, let us consider another result which is not as obvious as the previous ones but was still reasonably easy to formalize.

	\begin{nota}
		For $n \in \N$ we set $\F_n := \pi_n\inv\prth{\Alen}$ the $\sigma$-algebra of events which only depend on coordinates up to $n$.
	\end{nota}

	\begin{thm}
		\label{thm:condexp}
		Let $a, b \in \N$ with $a \le b$. Let $u \in \Xle{a}$. Let $f : X \to \R$ be an integrable function with respect to $\xi_a(u)$. Then for $\xi_a(u)$-almost every $x$, the function $f$ is integrable with respect to $\xi_b(\pi_b(x))$ and
		$$\E_{\xi_a(u)}[f \mid \F_b] (x) = \int_X f(y) \xi_b(\pi_b(x), \dy).$$
	\end{thm}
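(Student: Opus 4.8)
The plan is to invoke the standard characterization of conditional expectation (in \mathlib, \texttt{ae\_eq\_condExp\_of\_forall\_setIntegral\_eq}): a function $g$ agrees with $\E_{\xi_a(u)}[f \mid \F_b]$ almost everywhere as soon as $g$ is $\xi_a(u)$-integrable, $\F_b$-measurable, and satisfies $\int_S g \, \mathrm{d}\xi_a(u) = \int_S f \, \mathrm{d}\xi_a(u)$ for every $S \in \F_b$. The natural candidate is
$$g(x) := \int_X f(y) \, \xi_b(\pi_b(x), \mathrm{d}y).$$
Since $g$ depends on $x$ only through $\pi_b(x)$, and the map $z \mapsto \int_X f \, \mathrm{d}\xi_b(z)$ is measurable on $\Xle{b}$ by measurability of kernel integration, $g$ factors through $\pi_b$ and is therefore measurable with respect to $\F_b = \pi_b\inv(\Ale{b})$ by definition of $\F_b$.

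The workhorse is the disintegration formula obtained from Lemma~\ref{lem:xi-comp-eta} together with the definition of kernel composition: for any nonnegative or $\xi_a(u)$-integrable $h$,
$$\int_X h \, \mathrm{d}\xi_a(u) = \int_{\Xle{b}} \prth{\int_X h(y) \, \xi_b(z, \mathrm{d}y)} \eta_{a,b}(u, \mathrm{d}z).$$
Applying this to $h = |f|$ shows that $z \mapsto \int_X |f| \, \mathrm{d}\xi_b(z)$ is $\eta_{a,b}(u)$-integrable, hence finite $\eta_{a,b}(u)$-almost everywhere. Because ${\pi_b}_*\xi_a(u) = \eta_{a,b}(u)$ by Proposition~\ref{prop:xi_proj}, pulling back the null set along $\pi_b$ gives that $f$ is integrable with respect to $\xi_b(\pi_b(x))$ for $\xi_a(u)$-almost every $x$, so that $g$ is well defined almost everywhere. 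The same bound $|g(x)| \le \int_X |f| \, \mathrm{d}\xi_b(\pi_b(x))$, integrated against $\xi_a(u)$, yields $\int_X |g| \, \mathrm{d}\xi_a(u) \le \int_X |f| \, \mathrm{d}\xi_a(u) < \infty$, so $g$ is $\xi_a(u)$-integrable.

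For the set-integral equality I would write a generic $S \in \F_b$ as $S = \pi_b\inv(T)$ with $T \in \Ale{b}$, so $\ind{S} = \ind{T} \o \pi_b$, and apply the disintegration formula to $h = \ind{S} f$ and to $h = \ind{S} g$. The crucial ingredient is that $\xi_b(z)$ is concentrated on the fiber $\set{y : \pi_b(y) = z}$: indeed ${\pi_b}_*\xi_b(z) = \eta_{b,b}(z) = \delta_z$ by Proposition~\ref{prop:xi_proj}, since $\eta_{b,b}$ is the deterministic restriction kernel. Thus under $\xi_b(z, \cdot)$ one may replace $\ind{T}(\pi_b(y))$ by $\ind{T}(z)$ and $g(y) = \int_X f \, \mathrm{d}\xi_b(\pi_b(y))$ by the constant $\int_X f \, \mathrm{d}\xi_b(z)$; using that $\xi_b(z)$ is a probability measure, both $\int_S f \, \mathrm{d}\xi_a(u)$ and $\int_S g \, \mathrm{d}\xi_a(u)$ collapse to the same quantity $\int_{\Xle{b}} \ind{T}(z) \prth{\int_X f \, \mathrm{d}\xi_b(z)} \eta_{a,b}(u, \mathrm{d}z)$, which finishes the argument.

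The main obstacle I anticipate is not conceptual but bookkeeping around the signed Bochner integral: the disintegration formula and all the almost-everywhere manipulations must be justified at the level of integrability, so that the Fubini-type exchanges for kernels and the conditional-expectation characterization genuinely apply. This forces one to first run the argument with $|f|$ in order to secure the integrability statements before handling $f$ itself. Establishing that $\xi_b(z)$ lives on the fiber over $z$ — the single place where Proposition~\ref{prop:xi_proj} with $a = b$ does the real work — is what makes the two set-integrals coincide and is the heart of the proof.
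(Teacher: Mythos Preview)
Your strategy matches the paper's almost exactly: verify the defining property of conditional expectation on sets $S=\pi_b^{-1}(T)$, using the disintegration $\xi_a=\xi_b\o\eta_{a,b}$ (Lemma~\ref{lem:xi-comp-eta}) together with ${\pi_b}_*\xi_a(u)=\eta_{a,b}(u)$ (Proposition~\ref{prop:xi_proj}). The one place where the paper proceeds differently is precisely your ``concentrated on the fiber'' step. The paper points out that in full generality one \emph{cannot} prove that $\xi_b(z)$-a.e.\ $\pi_b(y)=z$, because $\{y:\pi_b(y)\ne z\}$ need not be measurable; so ``concentrated on the fiber'' is not available as a lemma. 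Instead the paper packages the needed identity as
\[
\eta_{a,b}(u)\oxm\xi_b=(y\mapsto(\pi_b(y),y))_*\xi_a(u),
\]
which lets one pass from $\ind{T}(z)$ to $\ind{T}(\pi_b(y))$ in one stroke without any pointwise a.e.\ statement.

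Your concrete manipulations are nonetheless valid: from ${\pi_b}_*\xi_b(z)=\delta_z$ one gets, for \emph{measurable} $T$, that $\xi_b(z)(\pi_b^{-1}(T))=\ind{T}(z)$, hence $\ind{T}(\pi_b(\cdot))=\ind{T}(z)$ holds $\xi_b(z)$-a.e.; and for the measurable function $\psi(w)=\int f\,\mathrm d\xi_b(w)$ one gets $\int \psi(\pi_b(y))\,\xi_b(z,\mathrm dy)=\int\psi\,\mathrm d\delta_z=\psi(z)$. So the argument goes through, but the justification is the pushforward identity applied to measurable test objects, not ``concentration on the fiber''. The paper's compProd lemma and your direct pushforward argument are two ways of encoding the same fact while staying clear of non-measurable singletons.
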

	In short, the expectation with respect to $\xi_a$ given all the information up to time $b$ is nothing more than the expectation with respect to $\xi_b$.
	\begin{proof}
		We do not present the proof of integrability here and only focus on the computation. Take $A \in \Ale{b}$. Our goal is to prove that
	$$\int_{\pi_b\inv(A)} \int_X f(y) \xi_b(\pi_b(x), \dy)\xi_a(u,\dx) = \int_{\pi_b\inv(A)} f \dxi_a(u).$$
	We start by writing
	\begin{align*}
		\int_{\pi_b\inv(A)} \int_X f(y) \xi_b(\pi_b(x), \dy)\xi_a(u,\dx) &= \int_A \int_X f(y) \xi_b(x, \dy)({\pi_b}_*\xi_a)(u,\dx) \\
		&= \int_A \int_X f(y) \xi_b(x, \dy)(\eta_{a, b})(u,\dx) \\
		&= \int_{\Xle{b}}\int_X\ind{A}(x)f(y) \xi_b(x, \dy)\eta_{a,b}(u,\dx),
	\end{align*}
	where the second equality is Proposition~\ref{prop:xi_proj}. Now we want to use the fact that $\xi_b(x)$-a.s., $\pi_b(y)=x$ to get
	\begin{align*}
		\int_{\Xle{b}}\int_X\ind{A}(x)f(y) \xi_b(x, \dy)\eta_{a,b}(u,\dx) &= \int_{\Xle{b}}\int_X\ind{A}(\pi_b(y))f(y) \xi_b(x, \dy)\eta_{a,b}(u,\dx).
	\end{align*}

	However this is not as straight-forward as it seems. Indeed, it is not possible to prove in Lean that $\xi_b(x)$-a.s., $\pi_b(y)=x$. The reason is that this would amount to prove that $\delta_x(\set{y \,|\, y \ne x}) = 0$, and this can only be proved if this set is measurable. However in our setup we are working on an arbitrary $\sigma$-algebra which cannot ensure this measurability.

	The argument above is still correct though, in particular because we are integrating a measurable function and we thus have measurability hypotheses. We solve the issue by proving the following result:
	\begin{lem}
		$\eta_{a, b}(u) \oxm \xi_b = (y \mapsto (\pi_b(y), y))_*\xi_a(u)$.
	\end{lem}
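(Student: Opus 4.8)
The plan is to reduce the claimed identity of measures on $\Xle{b} \x X$ to a pointwise statement about the single kernel $\xi_b$, and then to prove that pointwise statement on measurable rectangles. Both sides are finite measures, so it suffices to compare them on the $\pi$-system of measurable rectangles $B \x D$ with $B \in \A^{\le b}$ and $D \in \A$, which generates the product $\sigma$-algebra.

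First I would rewrite both sides as mixtures over $\eta_{a,b}(u)$. On the left, unfolding the composition-product of a measure with a kernel gives, for every measurable $C$,
$$\prth{\eta_{a,b}(u) \oxm \xi_b}(C) = \int_{\Xle{b}} \xi_b(x, C_x)\, \eta_{a,b}(u, \dx) = \int_{\Xle{b}} \prth{\delta_x \ox \xi_b(x)}(C)\, \eta_{a,b}(u,\dx),$$
so the left-hand side is the mixture $\int \delta_x \ox \xi_b(x)\,\eta_{a,b}(u,\dx)$. On the right, Lemma~\ref{lem:xi-comp-eta} gives $\xi_a(u) = \int_{\Xle{b}} \xi_b(x)\,\eta_{a,b}(u,\dx)$, and since the push-forward commutes with this mixture I obtain $g_*\xi_a(u) = \int_{\Xle{b}} g_*\xi_b(x)\,\eta_{a,b}(u,\dx)$, where $g : y \mapsto (\pi_b(y), y)$. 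Hence everything reduces to the pointwise identity
$$g_*\xi_b(x) = \delta_x \ox \xi_b(x) \qquad \text{for every } x \in \Xle{b}.$$

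To prove this pointwise identity I would use Proposition~\ref{prop:xi_proj} in the case $a = b$, which yields ${\pi_b}_*\xi_b(x) = \eta_{b,b}(x) = \delta_x$, equivalently $\xi_b(x, \pi_b\inv(B)) = \ind{B}(x)$ for every $B \in \A^{\le b}$. Testing both measures on a rectangle $B \x D$ gives
$$\prth{g_*\xi_b(x)}(B \x D) = \xi_b\prth{x, \pi_b\inv(B) \cap D} \qquad \text{and} \qquad \prth{\delta_x \ox \xi_b(x)}(B \x D) = \ind{B}(x)\,\xi_b(x, D),$$
and I would conclude by a case split on whether $x \in B$: if $x \in B$ then $\pi_b\inv(B)$ is $\xi_b(x)$-conull, so intersecting with $D$ does not change the measure and both sides equal $\xi_b(x, D)$; if $x \notin B$ then $\pi_b\inv(B)$ is $\xi_b(x)$-null, so both sides vanish. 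Extending from rectangles to all measurable sets by uniqueness of finite measures on a generating $\pi$-system closes the pointwise identity, and the mixture step then gives the lemma.

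The main obstacle is exactly the measurability subtlety flagged just before the statement: one cannot legitimately assert that $\xi_b(x)$-almost surely $\pi_b(y) = x$, because the set $\set{y : \pi_b(y) \ne x}$ need not be measurable in our arbitrary $\sigma$-algebra. The whole point of routing through the measurable reformulation ${\pi_b}_*\xi_b(x) = \delta_x$ and testing only against measurable rectangles $B \x D$ is to capture the same intuition while staying within provable, measurable statements; the case split on $x \in B$ is what lets the value $\xi_b(x, \pi_b\inv(B)) \in \set{0,1}$ do the work that an almost-sure argument would have done.
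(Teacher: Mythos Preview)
Your argument is correct and is essentially the rigorous unpacking of the paper's one-line justification. The paper only says informally that on the left one samples $x$ from $\eta_{a,b}(u)$ and then $y$ from $\xi_b(x)$, outputting $(x,y)$, while on the right one samples $y$ from $\xi_a(u)$ and outputs $(\pi_b(y),y)$, with the identity $\xi_b \o \eta_{a,b} = \xi_a$ tying the two together; your reduction to the pointwise identity $g_*\xi_b(x) = \delta_x \otimes \xi_b(x)$, checked on rectangles via ${\pi_b}_*\xi_b(x) = \delta_x$, is precisely the measurable-set version of that heuristic and circumvents the $\{y : \pi_b(y)\ne x\}$ measurability issue exactly as intended.
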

	\begin{lstlisting}
lemma partialTraj_compProd_traj {a b : ℕ} (hab : a ≤ b) (u : Π i : Iic a, X i) :
		(partialTraj κ a b u) ⊗ₘ (traj κ b) =
		(traj κ a u).map (fun x ↦ (frestrictLe b x, x)) \end{lstlisting}
	This result is not too surprising. On the left-hand side, we start with a random trajectory up to time $b$ given by $\eta_{a, b}(u)$, let us call it $x$, and we plug $x$ into $\xi_b$ to get another random trajectory $y$. In the end we output the couple $(x, y)$. On the right-hand side, we take a random trajectory $y$ given by $\xi_a(u)$ and output $(\pi_b(y), y)$. The identity $\xi_b \o \eta_{a, b} = \xi_a$ allows to conclude.

	We can finish the computation as follows:
	\begin{align*}
		\int_{\Xle{b}}\int_X\ind{A}(x)f(y) \xi_b(x, \dy)\eta_{a,b}(u,\dx) &= \int_{\Xle{b} \x X} \ind{A}(x)f(y) (\eta_{a, b}(u) \oxm \xi_b)(\dx\dy) \\
		&= \int_{\Xle{b} \x X} \ind{A}(x)f(y) ((y \mapsto (\pi_b(y), y))_*\xi_a(u))(\dx\dy) \\
		&= \int_X \ind{A}(\pi_b(y))f(y)\xi_a(u,\dy) \\
		&= \int_{\pi_b\inv(A)} f(y) \xi_a(u,\dy).\qedhere
	\end{align*}
	\end{proof}

	\section{Infinite product of probability measures}\label{sec:prod-measure}

	To conclude this paper we give a quick description of how Theorem~\ref{thm:it} can be used to build the product of an arbitrary family of probability measures. The product of finitely many measures has been available in \mathlib~for a long time now. It is based on iterating the construction for the product of two measures, whose formalization is exposed in Section~4 of \cite{Van24}. As mentioned in Section~\ref{sec:proba}, by taking constant kernels in Theorem~\ref{thm:it} we easily get the product of a countable family of probability measures, and this can be used to build the product of an arbitrary family.
	\begin{thm}\label{thm:prod-measure}
		Let $(X_i, \A_i, \mu_i)_{i\in\iota}$ be a family of probability spaces. Then there exists a unique measure on $(X, \A)$, called the \emph{product measure} and denoted by $\bigotimes_{i\in\iota} \mu_i$, such that for any finite set $I \subseteq \iota$, we have
		\begin{equation}\label{eq:def-prod}
			{\pi_I}_* \bigotimes_{i\in\iota} \mu_i = \bigotimes_{i\in I} \mu_i.
		\end{equation}
	\end{thm}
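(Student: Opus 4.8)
The plan is to split the argument according to whether the index set $\iota$ is countable, handling the countable case directly with Theorem~\ref{thm:it} and then bootstrapping to the general case. Throughout, the starting observation is that the finite products $\prth{\bigotimes_{i\in I}\mu_i}_{I\subseteq\iota\text{ finite}}$, which already exist in \mathlib, form a projective family in the sense of Definition~\ref{def:projective}: for finite $J\subseteq I$ one has ${\pi_{I\to J}}_*\bigotimes_{i\in I}\mu_i=\bigotimes_{i\in J}\mu_i$, a known property of the finite product. What Theorem~\ref{thm:prod-measure} asks for is precisely a projective limit (Definition~\ref{def:limite-proj}) of this family, together with its uniqueness.

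First I would treat the case $\iota=\N$. Take the constant kernels $\kappa_n:\Xlen\kernel X_{n+1}$ defined by $\kappa_n(\cdot)=\mu_{n+1}$, which are visibly Markov kernels, and let $\xi_0:X_0\kernel X$ be the kernel \lean{traj} produced by Theorem~\ref{thm:it}. Composing with the initial measure $\mu_0$ yields a probability measure $\nu:=\xi_0\o\mu_0$ on $X$. By Proposition~\ref{prop:xi_proj} we have ${\pi_b}_*\xi_0=\eta_{0,b}$, and since with constant kernels $\eta_{0,b}\o\mu_0=\bigotimes_{i\in\dbrack{0,b}}\mu_i$, we obtain ${\pi_b}_*\nu=\bigotimes_{i\in\dbrack{0,b}}\mu_i$ for every $b$. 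For an arbitrary finite $I\subseteq\N$ choose $b$ with $I\subseteq\dbrack{0,b}$ and factor $\pi_I=\pi_{\dbrack{0,b}\to I}\o\pi_b$; projectivity of the finite product then gives ${\pi_I}_*\nu=\bigotimes_{i\in I}\mu_i$, which is \eqref{eq:def-prod}. The same argument covers any countably infinite $\iota$ after fixing a bijection with $\N$.

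For a general index set $\iota$ I would build the measure through Carathéodory's theorem applied to the cylinder content. The projective family above yields, via Definition~\ref{def:cylinder-content}, an additive content $P$ on the measurable cylinders of $X$ with $P\prth{\pi_I\inv(A)}=\bigotimes_{i\in I}\mu_i(A)$. By Lemma~\ref{lem:cont-empty} it suffices to check that $P$ is continuous at $\empty$. Here is the crucial reduction: given a non-increasing sequence $(A_n)$ of cylinders with empty intersection, write $A_n=\pi_{I_n}\inv(B_n)$ with $I_n\subseteq\iota$ finite, so that the whole sequence depends only on the countable set $J:=\bigcup_n I_n\subseteq\iota$. Setting $\tilde A_n:=\pi_{J\to I_n}\inv(B_n)$, we have $A_n=\pi_J\inv(\tilde A_n)$, and surjectivity of $\pi_J$ gives $\bigcap_n\tilde A_n=\empty$, while consistency of the finite products gives $P(A_n)=\nu_J(\tilde A_n)$, where $\nu_J=\bigotimes_{j\in J}\mu_j$ is the genuine probability measure furnished by the countable case. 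Since $\nu_J$ is a measure and $(\tilde A_n)$ decreases to $\empty$, continuity from above yields $\nu_J(\tilde A_n)\tendl{n\to\infty}0$, hence $P(A_n)\tendl{n\to\infty}0$. Theorem~\ref{thm:carath} then extends $P$ to a measure $\bigotimes_{i\in\iota}\mu_i$ on $\A$, and since cylinder values are preserved by the extension, \eqref{eq:def-prod} holds. Uniqueness follows from the $\pi$-$\lambda$ theorem: the cylinders form a $\pi$-system generating $\A$, on which \eqref{eq:def-prod} pins down the value of any candidate measure, and all measures involved are probability measures.

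I expect the main obstacle to be the countable reduction in the general case, and in particular its formalization. Mathematically the content is routine, but passing to the countable coordinate set $J$, identifying the cylinders $A_n$ with cylinders $\tilde A_n$ over $X^J$, and transporting $\nu_J$ back along $\pi_J$ all involve exactly the kind of subtype and reindexing manipulations flagged in Section~\ref{sec:forma}; keeping the projections $\pi_J$, $\pi_{J\to I_n}$ and $\pi_{I_n}$ coherent, and matching $P(A_n)$ with $\nu_J(\tilde A_n)$ through the projectivity of the finite products, is where the real care is needed.
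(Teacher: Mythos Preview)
Your proposal is correct and follows essentially the same route as the paper: both set up the cylinder content from the finite products, reduce continuity at $\empty$ to a countable coordinate set $J=\bigcup_n I_n$, and then invoke continuity from above of the countable product measure $\bigotimes_{j\in J}\mu_j$ (whose existence comes from Theorem~\ref{thm:it} with constant kernels), concluding uniqueness via the $\pi$-$\lambda$ theorem. The only cosmetic difference is that where you appeal to surjectivity of $\pi_J$ to get $\bigcap_n\tilde A_n=\empty$, the paper instead fixes a point $(z_i)_{i\in\iota}\in X$ to build an explicit section $\phi:X^J\to X$ and writes $\tilde A_n=\phi\inv(A_n)$, which makes the emptiness of the intersection automatic from $\phi$ being a function; both arguments rest on the same fact that each $X_i$ is nonempty.
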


	\begin{proof}
		Given $A := \pi_J\inv(A')$ a measurable cylinder, we set $P(A) := \bigotimes_{j\in J} \mu_j(A')$. We want to prove that $P$ can be extended to a measure over $\A$, which will conclude the proof because this extension will satisfy Equation~\eqref{eq:def-prod} by definition of $P$. To prove that this extension exists we apply Lemma~\ref{lem:cont-empty}. Let $(A_n)_{n\in\N}$  be an non-increasing family of measurable cylinders such that $\bigcap_{n\in\N}A_n =\empty$. For any $n$, $A_n$ only depends on a finite set of coordinates $I_n \subseteq \iota$. We set $I := \bigcup_{n\in\N} I_n$, which is a countable set. Because $A_n$ is a measurable cylinder there exists $B_n \in \A^{I_n}$ such that $A_n = \pi_{I_n}\inv(B_n)$. On the other hand, for any $i \in \iota$, $X_i$ is a probability space and thus is nonempty. Let's denote $(z_i)_{i \in \iota}$ an element of $X$. Then we can define $\phi : X^I \to X$ which to some $(x_i)_{i \in I}$ associates $(y_i)_{i \in \iota}$, with the rule that if $i \in I$ then $y_i = x_i$, otherwise $y_i = z_i$. We clearly have that for any $n$, $\pi_{I \to I_n}\inv(B_n) = \phi\inv(A_n)$. Therefore we get:
		$$P(A_n) = \bigotimes_{i\in I_n} \mu_i(B_n) = {\pi_{I \to I_n}}_*\bigotimes_{i\in I} \mu_i(B_n) = \bigotimes_{i\in I} \mu_i(\pi_{I \to I_n}\inv(B_n)) = \bigotimes_{i\in I} \mu_i(\phi\inv(A_n)),$$
		and by monotonous continuity of $\bigotimes_{i\in I} \mu_i$, we get that $P(A_n) \tendl{n\to\infty} 0$.

		Uniqueness follows from the $\pi$-$\lambda$ theorem.
	\end{proof}

	The formalization of Theorem~\ref{thm:prod-measure} went quite smoothly. The most annoying part actually was to prove that the Markov kernel $\eta_{a, b}$ defined by Equation~\eqref{eq:def-eta} does reduce to the product measure in the case of constant kernels. Here is the precise statement.
	\begin{lstlisting}
theorem partialTraj_const {a b : ℕ} :
		partialTraj (fun n ↦ const _ (μ (n + 1))) a b =
		(Kernel.id ×ₖ (const _ (Measure.pi (fun i : Ioc a b ↦ μ i)))).map
			(IicProdIoc a b)\end{lstlisting}
	It mostly relies on proving the obvious statement that if $a \le b \le c$, then up to an isomorphism
	$$\bigotimes_{a < i \le c} \mu_i = \bigotimes_{a < i \le b} \mu_i \ox \bigotimes_{b < i \le c} \mu_i.$$

	\section{Conclusion}

	We presented the formalization of the Ionescu-Tulcea theorem. The interest of this project is twofold. First, it is an important result of probability theory which allows to build arbitrary families of independent random variables, and such objects are crucial if we hope to be able to formally build more complicated objects such as the Brownian motion. Second, it is interesting to see that some of the biggest obstacles in the formalization arose from some of the most natural aspects of the theorem, particularly the definition of the composition-product of multiple Markov kernels. This illustrates how some natural concepts have to be carefully translated when doing formalization, so as to not lose any mathematical meaning but also provide the smoothest API possible. The API is the set of simple lemmas which allow to manipulate an object without knowing the details of its implementation. It is crucial to provide a nice interface for other users to be able to use the object in question. Although we achieved what seems to be a tractable API, we need to wait for other \mathlib~users to try and use it to get more certainty that the formalization of the theorem went the right way.

	\section*{Acknowledgements}

	The author would like to thank: Sébastien Gouëzel, who proposed this project as part of an internship and gave precious advice, Rémy Degenne for reviewing the code and allowing it to make its way to \mathlib, and all the \mathlib~contributors who allowed this project to reach its goal.

	\bibliographystyle{apalike}
	\bibliography{biblio}
\end{document}